\documentclass[12pt, notitlepage]{amsart}
\usepackage{latexsym, amsfonts, amsmath, amssymb, amsthm, cite}
\usepackage{enumerate}

\newtheorem{proposition}{Proposition}
\newtheorem{theorem}{Theorem}

\newtheorem{conjecture}{Conjecture}

\usepackage{graphicx}
\usepackage{mathrsfs}

\title{Zeros in the character table of the symmetric group}

\author{Sarah Peluse and Kannan Soundararajan}

\address{Department of Mathematics, Stanford University, Stanford, CA, USA}
\email{speluse@stanford.edu}
\email{ksound@stanford.edu}
\dedicatory{To Trevor Wooley on the occasion of his sixty first birthday}

\thanks{The work of the first author was partially suppored by NSF
  grant DMS-2516641 and a Sloan Research Fellowship and the work of
  the second author was partially supported by NSF grant DMS-2100933.}

\begin{document}

\begin{abstract}
  Computations of Miller and Scheinerman suggest that the vast
  majority of the zeros appearing in the character table of the
  symmetric group are of a certain special type. While we cannot prove
  this, we resolve a conjecture arising in their paper
  concerning these zeros, and address a related question of
  Stanley.
\end{abstract}

\maketitle

  \section{Introduction} 
  
  \noindent In \cite{PS22} we showed that almost all entries in the character table of the symmetric group $S_N$ are multiples of any given prime number $p$, and in \cite{PS25} extended this to divisibility by prime powers.   This resolved conjectures of Miller \cite{Miller19}.   It is conceivable that these divisibility results hold because most of the character values are in fact zero, as is the case for high rank finite simple groups of Lie type~\cite{LarsenMiller2020}. This possibility has not been ruled out, but the numerical evidence (first generated by Miller \cite{Miller19}, who computed the character table for $N\le 38$) suggests that the proportion of zeros in the character table tends to zero with $N$.   More recent large-scale Monte Carlo simulations by Miller and Scheinerman \cite{MS25} suggest much more refined conjectures concerning the zeros in the character table.    In this paper we address a conjecture arising in their work, and also a related question of Stanley \cite{StanleyMO}; the problem of determining an asymptotic for the number of zeros in the character table remains open, but we can determine the number of zeros produced by three well-known criteria for guaranteeing zeros.   
 
  In order to state these conjectures fluidly, we review some notation and facts concerning the character table of the symmetric group.  Let $\lambda$ and $\mu$ denote partitions of the integer $N$.   We denote by $\chi^\lambda_\mu$ the value of the irreducible character of $S_N$ corresponding to the
  partition $\lambda$ evaluated on the conjugacy class of permutations in $S_N$ with cycle type corresponding to the partition $\mu$. It is an
  immediate consequence of the Murnaghan--Nakayama rule
  (see~\cite[Theorem 2.4.7]{JamesKerber1981}) that if $\mu$ has a part
  of size $t$ and $\lambda$ is a $t$-core, 
  then $\chi^\lambda_\mu=0$. Miller and Scheinerman call the pair of partitions $(\lambda,\mu)$ corresponding to a zero $\chi_{\mu}^{\lambda}=0$ in the character table 
  a \textit{zero of type II} if $\lambda$ is a $t$-core for some part size $t$ of $\mu$.   A pair $(\lambda,\mu)$ corresponds to a \textit{zero of type I} if
  $\lambda$ is a $\mu_1$-core, where $\mu_1$ denotes the largest part of   $\mu$. Clearly, type I zeros are a proper subset of type II
  zeros.

  The computations of Miller and Scheinerman suggest that most of the zeros appearing in the character table of $S_N$ are of
  type II, and that almost all type II zeros are type I zeros. After
  seeing the first version of their preprint, we pointed out to them
  that it is easy to show that the proportion of type I zeros in the
  character table of $S_N$ is asymptotically
  $\frac{2}{\log{N}}$. Based on this and their computational data,
  Miller and Scheinerman made the following conjecture.
  
  \begin{conjecture}[Miller and Scheinerman]
    The proportion of entries equal to zero in the character table of
    $S_N$ is asymptotically $\frac{2}{\log{N}}$.
  \end{conjecture}
  
  There is a third sufficient condition for $\chi_{\mu}^{\lambda}$ to equal zero, which includes all the zeros of type II.   For any natural number $t$, let ${\mathcal H}_t(\lambda)$ denote 
  the number of hook lengths in the Young diagram of $\lambda$ that are multiples of $t$.   Further, define 
  $$
  {\mathcal P}_t(\mu) = \frac 1t \sum_{t| \mu_j} \mu_j, 
  $$ 
  so that $t{\mathcal P}_t(\mu)$ is the sum of the parts in $\mu$ that are multiples of $t$.   Then a criterion of Stanley \cite{Stanleypaper} (see Lemma 7.4) states that 
  \begin{equation} 
  \label{St0} 
  \chi_{\mu}^{\lambda}=0 \qquad \text{ if  }\qquad{\mathcal P}_t(\mu) > {\mathcal H}_t(\lambda)  \text{  for some integer } t. 
  \end{equation} 
  We will call zeros $(\lambda,\mu)$ for which the criterion in ~\eqref{St0} holds \textit{zeros of type III}.

  We became aware of this criterion through a comment of Stanley on MathOverflow~\cite{StanleyMO}.   There, Stanley drew attention to the following elegant corollary of the criterion \eqref{St0}, which may also be found as Corollary 7.5 of \cite{Stanleypaper} or as Exercise 7.60 of \cite{Stanley}: the character value $\chi_{\mu}^{\lambda}$ equals zero if, in the polynomial ring ${\mathbb Z}[x]$,
  \begin{equation}\label{eq:Stanley}
    \prod_{i=1}^{\ell(\mu)}(1-x^{\mu_i})\nmid \prod_{u\in\lambda}(1-x^{h(u)}). 
  \end{equation}
Here $\ell(\mu)$ equals the number of parts in $\mu$, which we denote in descending order by $\mu_1,\dots,\mu_{\ell(\mu)}$ and $h(u)$ denotes the hook length corresponding to the box $u$ in the Young diagram of $\lambda$.  
 
   By considering the roots of both sides of \eqref{eq:Stanley}, one may see the following equivalent formulation of the criterion \eqref{eq:Stanley}:  $\chi_{\mu}^{\lambda}=0$ if there is an integer $t$ such that the number of parts in $\mu$ that are multiples of $t$ exceeds the number of hooks in $\lambda$ that are multiples of $t$.   Since ${\mathcal P}_t(\mu)$ is at least the number of parts in $\mu$ that are multiples of $t$, we see that condition \eqref{eq:Stanley} is a corollary of \eqref{St0}. Further, if $\lambda$ is a $t$-core, then ${\mathcal H}_t(\lambda)=0$, and so if $\mu$ has a part equal to $t$, then \eqref{eq:Stanley} implies that $\chi_{\mu}^{\lambda} =0$.   This reveals that zeros of type III include the zeros of type II previously described (which in turn include the zeros of type I).   
  
  Our main result determines an asymptotic for the number of zeros of types I, II, or III.  It turns out that all three types of zeros have the same leading order asymptotic, so that most zeros of type III are in fact simply zeros of type I.   
  
  \begin{theorem}\label{thm:main} For a natural number $N$, let $Z_I(N)$, $Z_{II}(N)$, and $Z_{III}(N)$ denote the number of pairs $(\lambda,\mu)$ corresponding to zeros of type I, II, and III respectively.   Then for integers $N\ge 3$, all three quantities $Z_I(N)$, $Z_{II}(N)$ and $Z_{III}(N)$ are asymptotically  
 $$ 
   p(N)^2 \Big( \frac{2}{\log N} + O \Big(\frac{(\log \log N)^2}{(\log N)^2}\Big)\Big), 
 $$ 
   where $p(N)$ denotes the number of partitions of $N$.    Moreover, 
   $$ 
   Z_{III}(N) - Z_{II}(N) = O(N^{-\frac 12} p(N)^2). 
   $$  
     \end{theorem}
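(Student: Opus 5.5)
Since zeros of type I are contained in those of type II, which are contained in those of type III, we have $Z_I(N)\le Z_{II}(N)\le Z_{III}(N)$. It therefore suffices to prove three things: a lower bound $Z_I(N)\ge p(N)^2(2/\log N - O((\log\log N)^2/(\log N)^2))$, the matching upper bound for $Z_{II}(N)$, and the bound $Z_{III}(N)-Z_{II}(N)=O(N^{-1/2}p(N)^2)$.

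\textbf{Type I.} Write $c_t(n)$ for the number of $t$-cores of size $n$, and $p_t(N)$ for the number of partitions of $N$ with largest part exactly $t$. Then
$$Z_I(N)=\sum_t p_t(N)\,c_t(N).$$
Two standard inputs describe these quantities.
\begin{enumerate}
\item By the Erd\H{o}s--Lehner theorem, the largest part of a random partition of $N$ is concentrated at $t=\frac{\sqrt{6N}}{2\pi}(\log N+O(1))$, with doubly exponential (Gumbel) tails.
\item The proportion of $t$-cores among partitions of $N$ is $c_t(N)/p(N)$. Using the generating function $\prod_n (1-x^{tn})^t/(1-x^n)$, or Granville--Ono-type estimates, this proportion is essentially the probability that a random partition has no hook of length divisible by $t$. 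Equivalently, a random partition has $t$-quotient empty, which happens with probability about $\exp(-\sum_{k}t\cdot e^{-\pi tk/\sqrt{6N}}\cdots)$. In the relevant range $t\asymp \sqrt N\log N$ the probability of being a $t$-core is roughly $1-(\text{number of hooks divisible by } t)$-type Poisson behaviour.
\end{enumerate}

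The cleaner route is to compute directly that $c_t(N)/p(N)$ is approximately $\exp(-C\sqrt N e^{-\pi t/\sqrt{6N}}/t\cdot\ldots)$. Averaging this against the Gumbel law of the largest part then produces $2/\log N$. Heuristically, both the largest part and the hook-avoidance threshold live at scale $\frac{\sqrt{6N}}{2\pi}\log N$, but with thresholds differing by a factor $1$ versus $1/2$ in the exponent, and this mismatch yields the factor $2/\log N$. Errors of order $(\log\log N)^2/(\log N)^2$ arise from the width of the window in $t$. I would make this precise via the saddle-point asymptotic for $\prod(1-x^{tn})^t/(1-x^n)$ at $x=e^{-\pi/\sqrt{6N}}$, uniformly for $t$ in $[\sqrt N, \sqrt N(\log N)^2]$, with crude bounds outside.

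\textbf{Type II.} For the upper bound I would bound
$$Z_{II}(N)-Z_I(N)\le \sum_{t<s}\#\{\mu:\ \mu_1=s,\ t\in\mu\}\cdot c_t(N).$$
Cores with $t$ much smaller than the typical largest part are rare: $c_t(N)/p(N)$ decays like $\exp(-N^{1/2}e^{-ct/\sqrt N})$. Partitions with $\mu_1$ much smaller than typical are likewise rare by Erd\H{o}s--Lehner. The remaining window of $t$ has length $O(\sqrt N\log\log N)$, and a fixed part $t$ of that size appears in a random $\mu$ with probability $O(1/\log N)$. This yields the secondary term.

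\textbf{Type III versus type II.} Suppose $(\lambda,\mu)$ is a type III zero but not of type II. Then some $t$ has $\mathcal P_t(\mu)>\mathcal H_t(\lambda)\ge1$, where $\mathcal H_t(\lambda)\ge 1$ holds because $\lambda$ is not a $t$-core if $t$ divides a part. Two cases arise.
\begin{enumerate}
\item If $t$ is not itself a part of $\mu$, then $\mu$ has a part $kt$ with $k\ge2$ and $\lambda$ has a hook divisible by $t$. Since $\lambda$ is not a $kt$-core but has few $t$-hooks, $\mathcal H_t(\lambda)\ge k$. So $\mathcal P_t(\mu)\ge k+1$ forces either two parts divisible by $t$ or a part $\ge (k+1)t$.
\item If $t$ is a part of $\mu$, then $\mathcal H_t(\lambda)\ge1$ still requires $\mathcal P_t(\mu)\ge 2$, i.e.\ two parts divisible by $t$.
\end{enumerate}
In both cases $\mu$ must contain two parts divisible by a large $t$ of size about $\sqrt N\log N$, while $\lambda$ has an unusually small number of $t$-hooks. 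Each of these events costs roughly $N^{-1/4}$: a part $t$ occurs with probability about $1/\sqrt N$ per specific value, summed over the multiples, and the deficit in hooks is similarly penalised. Together they give $O(N^{-1/2})$.

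The main obstacle I expect is the uniform saddle-point estimate for $c_t(N)$ with the required error terms; the bound for $Z_{III}-Z_{II}$ also needs a careful uniform estimate of the probability that $\lambda$ has few hooks divisible by $t$ (via the $t$-quotient).
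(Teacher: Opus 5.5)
\textbf{The gap: the range $2\le t\le T_1$ in the step $Z_{III}-Z_{II}=O(N^{-1/2}p(N)^2)$.} Your upper bound for $Z_{III}$ also rests on this step. Your reduction is correct as far as it goes: if $(\lambda,\mu)$ is a type III zero that is not of type II, then for the offending $t$ one has $\mathcal P_t(\mu)>\mathcal H_t(\lambda)\ge 1$, so $\mu$ has at least two parts divisible by $t$. But nothing forces $t$ to be of size $\sqrt N\log N$. For small $t$ (say $t=2$), having two parts divisible by $t$ is typical, so the real content is showing that $\mathcal P_t(\mu)>\mathcal H_t(\lambda)$ essentially never happens for $2\le t\le T_1$. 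This needs two uniform large-deviation bounds, which you only list as an expected obstacle:
\begin{enumerate}[(a)]
\item A lower-tail bound for $\mathcal H_t(\lambda)$. The paper puts $w=y<1$ into Han's generating function (equivalently, uses the core--quotient decomposition) with Rankin's trick, taking $y=q^{t/\log N}$ for $t\le T_0$ and $y=q^t$ for $T_0<t\le T_1$. This gives $\mathcal H_t(\lambda)>(N/t)(1-2/\log N)$, resp.\ $\mathcal H_t(\lambda)>\frac{\sqrt{6N}}{4\pi}q^t$, outside $O(N^{-10}p(N))$ exceptions.
\item An upper-tail bound placing $\mathcal P_t(\mu)$, which is typically about $N/t^2$, below the same thresholds outside $O(N^{-10}p(N))$ exceptions.
\end{enumerate}

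Your accounting of two events each costing $N^{-1/4}$ is also not where the $N^{-1/2}$ comes from. For $t>T_1$, where $q^{T_1}\asymp N^{-1/2}$, no information about $\lambda$ is needed. The $\mu$ with $\mathcal P_t(\mu)\ge 2$ for some $t>T_1$ number $\ll p(N)\sum_{t>T_1}q^{2t}\ll \sqrt N q^{2T_1}p(N)\ll N^{-1/2}p(N)$. The remaining case $\mathcal P_t(\mu)=1>\mathcal H_t(\lambda)$ means $t$ is itself a part and $\lambda$ is a $t$-core, which is a type II zero.

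\textbf{The main-term part also has problems.} Your heuristic for the proportion of $t$-cores has the wrong exponent. For $t\ge T_1$ the saddle point gives $c_t(N)=p(N)(\exp(-tq^t)+O(N^{-1/12}))$, with $tq^t=te^{-\pi t/\sqrt{6N}}$. An exponent of shape $\sqrt N e^{-\pi t/\sqrt{6N}}/t$ is $O(N^{-1/2})$ at $t\asymp\sqrt N\log N$, and would make almost every $\lambda$ a $t$-core.

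With the correct exponent, cores are rare not only below the typical largest part but up to $T_2\approx\frac{\sqrt{6N}}{\pi}(\frac12\log N+\log\log N-\log\log\log N)$. This is about $\frac{\sqrt{6N}}{\pi}\log\log N$ above the typical $\mu_1$, and it is what drives the type II bound:
\begin{itemize}
\item $t\in(T_1,T_2]$ contributes $\ll p(N)^2\sum_t q^te^{-tq^t}\ll p(N)^2(\log N)^{-100}$.
\item $t>T_2$ together with a larger part forces two parts of $\mu$ beyond $T_2$. This happens for $\ll(\sqrt N q^{T_2})^2p(N)\ll p(N)(\log\log N/\log N)^2$ partitions.
\end{itemize}
Your ``window'' argument does not produce this.

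Finally, the paper never uses the Erd\H{o}s--Lehner law for the main term. Since two parts beyond $T_2$ are rare, it counts $\mu$ containing some part $t>T_2$ via $p(N-t)=q^tp(N)(1+O(\min(1,tN^{-3/4})))$. It then evaluates $\sum_{t>T_2}q^te^{-tq^t}=2/\log N+O(\log\log N/(\log N)^2)$; the factor $2$ comes from $T_2\log(1/q)\approx\frac12\log N$. Your route through the distribution of $\mu_1$ would additionally require a local, uniform version of Erd\H{o}s--Lehner near $T_2$.
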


This result resolves a conjecture of Miller and Scheinerman that most zeros of type II are in fact of type I.   It also addresses a question of Stanley \cite{StanleyMO}, who asked for an understanding of the number of type III zeros, and we see that these are very nearly equal to the number of type II zeros.   It would be possible to give a more accurate asymptotic for $Z_{I}(N)$, with sharper error terms: namely, with an error term $O(N^{-c}p(N)^2)$ for a suitable positive constant $c$, $Z_I(N)$ is approximately
$$ 
p(N)^2 \int_0^1 \exp\Big(-y \Big( 1+ \log \frac{\sqrt{6N}}{\pi y} \Big) \Big) dy.  
$$ 
This can be expressed as an asymptotic expansion 
$$ 
\frac{p(N)^2}{\log (\sqrt{6N}/\pi)} \sum_{k=0}^{\infty} P_k(\log \log \sqrt{6N}/\pi) (\log \sqrt{6N}/\pi)^{-k}, 
$$ 
where $P_k$ is a polynomial of degree $k$.  For small $N$, the effect of the lower order terms in our asymptotic for $Z_I(N)$ is noticeable: for instance, with $N=50000$, the precise asymptotic expansion above suggests that the proportion  of type I zeros is about $0.13$ which is in good agreement with the empirical data in \cite{MS25}, whereas the leading order term in Theorem \ref{thm:main} gives a substantially larger proportion of about $0.1848$.   

The bulk of the difference between $Z_{II}(N)$ and $Z_I(N)$ is comprised of pairs $(\lambda,\mu)$ where $\lambda$ is a $\mu_2$-core but not a $\mu_1$-core.   To determine the asymptotic count of such pairs, we would need to understand how different hook lengths are distributed in a random partition $\lambda$, which we do not know.

    \section{Preliminaries on the asymptotic for $p(N)$}  
 
\noindent The number of conjugacy classes in $S_N$, which equals the number of irreducible representations of $S_N$, is given by the partition function $p(N)$.   In this section we recall some standard facts regarding the Hardy--Ramanujan asymptotic formula for $p(N$), namely 
 $$
 p(N) \sim \frac{\exp(2\pi \sqrt{N}/\sqrt{6})}{4N\sqrt{3}}. 
 $$ 
 Both the asymptotic formula and the salient features of the proof that we now recall will be useful in our subsequent work. 
 
 For $|z|<1$ denote the generating function for $p(N)$ by 
 $$ 
 {\mathcal F}(z) = \prod_{n=1}^{\infty} (1-z^n)^{-1} = \sum_{N=0}^{\infty} p(N) z^N. 
 $$ 
 The Hardy-Ramanujan formula is obtained through an analysis of the Cauchy integral formula applied to $\mathcal{F}(z)$:
 \begin{equation} 
 \label{0} 
 p(N) = \frac{1}{2\pi i} \int_{|z|=q} {\mathcal F}(z) z^{-N-1} dz = \frac{1}{2\pi} \int_{-\pi}^{\pi} {\mathcal F}(q e^{i\theta}) q^{-N} e^{-iN\theta} d\theta,  
 \end{equation} 
 where the parameter $q <1$ is chosen carefully.   The value $q$ is chosen so as to minimize ${\mathcal F}(r) r^{-N}$ over all $r\in (0, 1)$.   Note that 
 $$ 
 \log {\mathcal F}(z) =\sum_{n=1}^{\infty} \log (1-z^n)^{-1} = \sum_{n=1}^{\infty} \sum_{k=1}^{\infty} \frac{z^{nk}}{k} = 
 \sum_{m=1}^{\infty} \frac{\sigma(m)}{m} z^m,
 $$ 
 upon grouping terms according to $m=nk$ and writing $\sigma(m) = \sum_{m=nk} n$. Thus to minimize ${\mathcal F}(r) r^{-N}$, we must choose $r$ so that $r\frac{{\mathcal F}^{\prime}(r)}{{\mathcal F} (r)}  = N$, or in other words, 
 $$ 
 \sum_{m=1}^{\infty} \sigma(m) r^m = N. 
 $$ 
 A suitable value of $q$, which is very nearly the optimal solution to the above equation, is given by 
 \begin{equation}
 \label{1} 
 q = \exp\Big( - \frac{\pi}{\sqrt{6N}}\Big), 
 \end{equation} 
 and throughout this paper when we write $q$ we have this choice in mind.   
 
 For large $x$, a straightforward asymptotic calculation gives 
 \begin{equation}
 \label{1.5} 
 \sum_{n=1}^{\infty} \frac{\sigma(n)}{n} e^{-n/x} = \frac{\pi^2}{6} x - \frac 12 \log (2\pi x) + O(x^{-1}). 
 \end{equation} 
 In fact, one can be much more precise thanks to a modularity relation but~\eqref{2} will be sufficient for our purposes.
 With $q$ as in~\eqref{1} it follows that (taking $x= \sqrt{6N}/\pi$ in \eqref{1.5}) 
  \begin{equation} 
 \label{2} 
 \log {\mathcal F}(q) =\sum_{m=1}^{\infty} \frac{\sigma(m)}{m} \exp\Big(- \frac{\pi m}{\sqrt{6N}} \Big) =  \frac{\pi}{\sqrt{6}}\sqrt{N} - \frac 12 \log (2\sqrt{6N}) +O(N^{-\frac 12}). 
  \end{equation}  
  Returning to the right-most formula in~\eqref{0}, the integrand drops sharply in size as $|\theta|$ increases, and the contribution from $|\theta|$ substantially larger than $N^{-\frac 34}$ is negligible.   To see this, note that 
\begin{align*} 
\log \frac{|{\mathcal F}(q e^{i\theta})|}{{\mathcal F}(q)} &= 
                                                             -\sum_{m=1}^{\infty} \frac{\sigma(m)}{m} (1- \cos m\theta) q^m \\
  &\le - \sum_{m=1}^{\infty} q^{m} (1- \cos m\theta) 
=- \Big( \frac{q}{1-q} - \text{Re} \frac{qe^{-i\theta}}{1-qe^{i\theta}}\Big), 
\end{align*}
and a small calculation (in which it is helpful to distinguish the cases $|\theta|\le (1-q)$ and $\pi \ge |\theta| > (1-q)$) establishes that for an absolute constant $c>0$,
\begin{equation} 
\label{3} 
\log \frac{|{\mathcal F}(q e^{i\theta})|}{{\mathcal F}(q)} \le - c \min ( N^{\frac 32} \theta^2, N^{\frac 12}).  
\end{equation} 

From our analysis above, we conclude that 
\begin{equation} 
\label{4} 
p(N) = \frac{1}{2\pi} \int_{|\theta| \le N^{-\frac 35}} {\mathcal F}(qe^{i\theta}) q^{-N} e^{-iN\theta} d\theta + O\Big( {\mathcal F}(q) q^{-N} \exp(-c N^{\frac 3{10}})\Big). 
\end{equation} 
The choice~\eqref{1} of $q$ is such that, for small $\theta$, the argument of ${\mathcal F}(qe^{i\theta}) e^{-iN\theta}$ is approximately zero and ${\mathcal F}(qe^{i\theta}) e^{-iN\theta}$ is approximated by ${\mathcal F}(q) e^{-C N^{\frac 32} \theta^2}$ for a suitable absolute constant $C>0$.  Evaluating the resulting Gaussian integral leads to the Hardy--Ramanujan formula.  We record two further useful facts following from this analysis: 
\begin{equation} 
\label{5} 
p(N) \sim \frac{{\mathcal F}(q)  q^{-N}}{2 \cdot 6^{\frac 14} \cdot N^{\frac 34}}  \sim \frac{\exp(2\pi \sqrt{N}/\sqrt{6})}{4N\sqrt{3}} 
\end{equation} 
and 
\begin{equation} 
\label{6} 
\int_{-\pi}^{\pi} |{\mathcal F}(qe^{i\theta})| q^{-N} d\theta \ll N^{-\frac 34} {\mathcal F}(q) q^{-N} \ll p(N). 
\end{equation}

 \section{Distribution of hook lengths} 

\noindent For any partition $\lambda$, the \textit{hook} associated to a box $u$ in the Young diagram of $\lambda$ is the collection of boxes to the right of $u$ in the same row as $u$, below $u$ in the same column as $u$, and $u$ itself. The \textit{length} of this hook, denoted by $h(u)$, equals the number of boxes in the hook.

Given a partition $\lambda$ of $N$ and a natural number $t$ we are interested in the quantity ${\mathcal H}_t(\lambda)$, which denotes the number of hook lengths in $\lambda$ that are multiples of $t$.   We will require a result of Han \cite{Han10} that gives a useful formula for the generating function for ${\mathcal H}_t(\lambda)$. If $z$ and $w$ are complex numbers inside the unit circle, Theorem 1.3 of~\cite{Han10} gives 
  \begin{equation} 
  \label{9} 
 \sum_{N=0}^{\infty} \sum_{\lambda \vdash N} z^{N} w^{{\mathcal H}_t(\lambda)} = \prod_{n=1}^{\infty} (1-z^n)^{-1} \frac{(1-z^{t n})^t}{(1-w^n z^{tn})^t}. 
  \end{equation} 
  A partition $\lambda$ of $N$ with ${\mathcal H}_t(\lambda)=0$ (thus no hook lengths that are multiples of $t$) is known as a $t$-core partition, and the number of such partitions is denoted by $c_t(N)$.  Han's formula generalizes the well-known formula for the generating function for $t$-cores, 
  \begin{equation} 
  \label{10} 
  \sum_{N=0}^{\infty} c_t(N) z^N = \prod_{n=1}^{\infty}(1-z^n)^{-1} (1-z^{tn})^t. 
  \end{equation}

In this section, we shall show that ${\mathcal H}_t(\lambda)$ is suitably large for most partitions $\lambda$ in various ranges of $t$. To describe these results fluidly, it is convenient to define the following thresholds for $t$: 
\begin{equation} 
\label{7} T_0 = \frac{\sqrt{6N}}{\pi}(\log N)^{-4}, \qquad T_1 = \frac{\sqrt{6N}}{\pi} \Big( \log \sqrt{N} -20 \Big),
\end{equation} 
and 
\begin{equation} 
\label{8} 
T_2 = \frac{\sqrt{6N}}{\pi} \Big( \log \sqrt{N} + \log \log N - \log \log \log N -20 \Big). 
\end{equation} 
Only the thresholds $T_0$ and $T_1$ are relevant for this section, and the threshold $T_2$ will only appear in Section 5 when we complete the proof of the main theorem.  Throughout we assume that $N$ is sufficiently large, so that quantities like $\log \log \log N$ are well defined.  
 
 \begin{proposition} \label{prop1} Let $N$ be large, and recall that $q= \exp(-\pi/\sqrt{6N})$.   
 
 (i)  If $t\le T_0$ is an integer, then there are $\ll N^{-10} p(N)$ partitions $\lambda$ of $N$ with 
 $$
 {\mathcal H}_t(\lambda) \le \frac{N}{t} \Big(1-\frac 2{\log N}\Big).  
 $$ 
 
 (ii)  In the range $T_0 < t\le T_1$,  the number of partitions $\lambda$ of $N$ with ${\mathcal H}_t(\lambda)  \le \frac{\sqrt{6N}}{4\pi} q^t$ is $\ll N^{-10} p(N)$.  
 \end{proposition}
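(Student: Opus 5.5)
We use a Chernoff-type bound driven by Han's formula \eqref{9}: for $0<w\le 1$,
$$
\#\{\lambda\vdash N: {\mathcal H}_t(\lambda)\le K\} \le w^{-K}\sum_{\lambda\vdash N} w^{{\mathcal H}_t(\lambda)}.
$$
Write the inner sum as a Cauchy integral over $|z|=q$ of the right side of \eqref{9}, namely ${\mathcal F}(z) G_w(z)$ with $G_w(z)=\prod_n (1-z^{tn})^t(1-wz^{tn})^{-t}$. We bound $|G_w(z)|\le G_w(|z|)$; this holds because $\log G_w(z)=t\sum_n\sum_k (w^k-1)z^{tnk}/k$ has nonpositive coefficients. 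Then \eqref{6} gives
$$
\sum_{\lambda\vdash N} w^{{\mathcal H}_t(\lambda)} \ll p(N)\, G_w(q).
$$
So the count is at most
$$
\ll p(N)\exp\Big(K\log(1/w) - t\sum_{n}\log\frac{1-wq^{tn}}{1-q^{tn}}\Big),
$$
and it remains to choose $w$ and estimate the sum over $n$.

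\textbf{Part (ii).} Take $w=1/2$, or simply $w\to 0$, which gives the $t$-core style bound $\prod(1-q^{tn})^t$. The exponent is then at most
$$
K\log 2 - t\sum_n \log\frac{1-q^{tn}/2}{1-q^{tn}}.
$$
Each term is $\ge c\,q^{tn}$, and the $n=1$ term alone contributes at least $c\,t q^t$. For $t\le T_1$ we have $q^t\ge e^{20}N^{-1/2}$, so $tq^t \gg \sqrt N (\log N)^{-4}\cdot e^{20}N^{-1/2}$, which is useless at the bottom of the range. So we should instead sum over $n$: $t\sum_n q^{tn}\approx t q^t/(1-q^t)$, which for $t\le \sqrt{6N}/\pi$ is about $\sqrt{6N}/\pi$ and in general is at least $t q^t$.

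We must compare this with $K=\frac{\sqrt{6N}}{4\pi}q^t$, so we optimise $w$ properly. With $\log(1/w)=s$, the exponent is roughly
$$
Ks-(1-e^{-s})\,t\sum_n q^{tn}.
$$
Since $t\sum_n q^{tn}\ge \frac{\sqrt{6N}}{\pi}q^t(1+o(1))$ in the range $t\ge T_0$ (use $t q^t/(1-q^t)\ge \frac{\sqrt{6N}}{\pi}q^t$ because $x/(1-e^{-x})\ge 1$), taking $s$ a moderate constant (e.g. $s=1$) gives an exponent $\le -c\sqrt N q^t$. For $t\le T_1$ this is $\le -c e^{20}$, which is not small enough. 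So the right choice is a large $s$, with $s\approx \log 4$ balancing; then $(1-e^{-s})-s/4>0.5$ and the exponent is $\le -\tfrac12\cdot\frac{\sqrt{6N}}{\pi}q^t\le -\tfrac12\cdot\frac{\sqrt6}{\pi} e^{20}$. That is still only a constant. Hence the constant $20$ in $T_1$ must be what delivers $N^{-10}$: $\sqrt N q^{T_1}=e^{20}$ up to constants, and $e^{20}\cdot 0.3\ge 10\log N$ fails for large $N$. So we need the sharper $s$-optimisation: choosing $s=\log(\pi t\sum q^{tn}/... )$ large, the exponent is $\approx -\frac{\sqrt{6N}}{\pi}q^t(1-\tfrac{1+\log 4}{4}\cdots)$. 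I expect matching the numerology of $T_1$ to $N^{-10}$ to be the main obstacle, and will recheck the precise form of $G_w(q)$. Note that the exponent $t$ in $(1-z^{tn})^t$ gives an extra factor $t$ in the $n$-sum that I may have mishandled: $\log G_w(q)=-t\sum_n\log\frac{1-wq^{tn}}{1-q^{tn}}$, so the relevant mass is $t\cdot\frac{q^t}{1-q^t}$. For $t\gg\sqrt N$ this is $t q^t$, which is larger than $\frac{\sqrt{6N}}{\pi}q^t$ by a factor $\asymp\log N$ near $T_1$. That factor of $\log N$ supplies the needed $N^{-10}$.

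\textbf{Part (i).} Here $t\le T_0$, so $q^t\approx 1-t\pi/\sqrt{6N}$ and the mass $t\sum_n q^{tn}\approx \sqrt{6N}/\pi\cdot(1+O(t/\sqrt N))$ is concentrated over many $n$. We take $w=1-\delta$ with $\delta$ small. The exponent becomes
$$
K\delta - t\sum_n \log\Big(1+\frac{\delta q^{tn}}{1-q^{tn}}\Big).
$$
Expanding to second order, with mean $\mu=t\sum_n q^{tn}/(1-q^{tn})\approx N/t$ (using $\sum \sigma$-type asymptotics as in \eqref{1.5}, and matching $\sum_{\lambda} {\mathcal H}_t/p(N)\approx N/t$) and variance $V$ of order $t\sum_n q^{2tn}/(1-q^{tn})^2\ll N^{3/2}/t^2$, the exponent is $\le -\delta\,\mu\cdot\frac{2}{\log N}+\delta^2 V$. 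Choosing $\delta\asymp \mu/(V\log N)$ gives $-c\,\mu^2/(V\log^2N)\ll -c\sqrt N/\log^2N$, which is far below $-10\log N$. Care is needed where $q^{tn}/(1-q^{tn})$ is large (small $n$); there we should bound $\log(1+x)\ge x/(1+x)$, or handle it via $\delta\le 1/2$. The computation of $\mu=N/t\,(1+O(t/\sqrt N))$ requires $t\le T_0$ so that the error $O(t/\sqrt N)=O((\log N)^{-4})$ is below $1/\log N$.
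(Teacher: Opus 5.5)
\textbf{Part (i) fails as written, because you misread Han's formula.} Your overall strategy, a Chernoff/Rankin bound fed by Han's formula at $z=q$, is the paper's. But in \eqref{9} the denominator is $(1-w^nz^{tn})^t$, not $(1-wz^{tn})^t$. Equivalently, $\sum_{\lambda}q^{|\lambda|}w^{{\mathcal H}_t(\lambda)}={\mathcal F}(q)\big({\mathcal F}(wq^t)/{\mathcal F}(q^t)\big)^t$.

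With your $G_w$, the tilted mean is $t\sum_n \frac{q^{tn}}{1-q^{tn}}=t\sum_m d(m)q^{tm}\approx \frac{\sqrt{6N}}{\pi}\log\frac{\sqrt{6N}}{\pi t}$, not $N/t$. Using $\log(1/w)\ge 1-w$ and $\log\frac{1-wx}{1-x}\le \frac{(1-w)x}{1-x}$, your exponent is at least $(1-w)\big(K-t\sum_n\frac{q^{tn}}{1-q^{tn}}\big)$. This is $\ge 0$ for every $w\in(0,1]$, since $K=\frac Nt(1-\frac2{\log N})$ is far larger when $t\le T_0$. So your bound in (i) is vacuous.

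The value $\mu\approx N/t$ you quote is the true mean, but it only comes out of the correct formula. There the derivative at $w=1$ is $t\sum_n \frac{nq^{tn}}{1-q^{tn}}=t\sum_m\sigma(m)q^{tm}=\frac Nt+O(\sqrt N)$. Once this is fixed, your choice $\log(1/w)\asymp t/(\sqrt N\log N)$ is the paper's $y=q^{t/\log N}$. Instead of controlling Taylor remainders, you can evaluate $t\big(\log{\mathcal F}(q^t)-\log{\mathcal F}(yq^t)\big)$ directly by \eqref{1.5} at $x=\sqrt{6N}/(\pi t)$ and at $x(1+1/\log N)^{-1}$. This gives an exponent of about $-\pi\sqrt N/(\sqrt6(\log N)^2)$.

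Part (ii) does survive the misreading. Since $w^n\le w$, your $G_w(q)$ is an upper bound for the true one, and your lower bound $(1-w)q^{tn}$ for each logarithm still applies.

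\textbf{The coefficient bound is justified backwards.} Nonpositive coefficients in $\log G_w$ give $\mathrm{Re}\log G_w(z)\ge \log G_w(|z|)$, i.e.\ $|G_w(z)|\ge G_w(|z|)$; think of $1-z$. Use instead that ${\mathcal F}(z)G_w(z)$ has nonnegative coefficients in $z$. Then the $N$-th coefficient is at most $q^{-N}{\mathcal F}(q)G_w(q)\ll N^{3/4}p(N)G_w(q)$ by \eqref{5}, and the extra $N^{3/4}$ is harmless.

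\textbf{Part (ii): the right idea is buried under errors.} Put $M=tq^t/(1-q^t)$. You correctly have $M\ge 4K$, and also $M\ge tq^t$. Your claim that $tq^t$ is ``useless at the bottom of the range'' is wrong: it mixes the two endpoints, and at $t\approx T_0$ one has $tq^t\approx T_0$, which is huge. With $s=\log 4$ the exponent is at most $-(\tfrac34-\tfrac{\log 4}{4})M\le -0.4\,tq^t$; your figure of $\tfrac12$ is wrong. Since $tq^t$ is unimodal, its minimum on $(T_0,T_1]$ is attained at $T_1$, where it equals $\frac{\sqrt6}{\pi}e^{20}(\tfrac12\log N-20)$. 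This gives far more than $N^{-10}$, even after the $N^{3/4}$ loss. The paper does this in one line with $y=q^t$: $y^{-K}=\exp(tq^t/4)$ is set against $\exp(-tq^t)$.
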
 
 
 \begin{proof} 
 Given an integer $t$, and a parameter $k$ we wish to bound the number of partitions $\lambda$ of $N$ with ${\mathcal H}_t(\lambda) \le k$.   For any $y \in (0,1)$ this is 
 $$ 
 \le y^{-k} \sum_{\lambda \vdash N} y^{{\mathcal H}_t(\lambda)} \le y^{-k} q^{-N} \sum_{\lambda} q^{|\lambda|} y^{{\mathcal H}_t(\lambda)}, 
 $$ 
 which, by Han's formula  \eqref{9} and the asymptotic \eqref{5},  is 
 $$
 \le y^{-k} q^{-N}  {\mathcal F}(q) \prod_{j=1}^{\infty} \frac{(1-q^{tj})^t}{(1-y^j q^{tj})^t} \ll N^{\frac 34} p(N) y^{-k}  \prod_{j=1}^{\infty} \frac{(1-q^{tj})^t}{(1-y^j q^{tj})^t}.   
 $$ 

Now,
$$ 
y^{-k} \prod_{j=1}^{\infty} \frac{(1-q^{tj})^t}{(1-y^j q^{tj})^t} = y^{-k} \exp\Big( t \sum_{j=1}^{\infty} \log \frac{(1-q^{tj})}{(1-y^jq^{tj})}\Big) = y^{-k} \exp\Big(t
\sum_{j=1}^{\infty} \sum_{\ell=1}^{\infty} \frac{(yq^t)^{j\ell} - q^{tj\ell}}{\ell} \Big), 
$$ 
and grouping terms according to $n=j\ell$, we conclude that the number of partitions of $N$ with ${\mathcal H}_t(\lambda) \le k$ is 
 \begin{equation} 
\label{8.5} 
\ll N^{\frac 34} p(N) y^{-k} \exp\Big( -t \sum_{n=1}^{\infty} \frac{\sigma(n)}{n} (1-y^n)q^{tn}\Big). 
\end{equation}

To prove part (i), we choose $y= q^{t/\log N}$, and take $k= Nt^{-1} (1-2/\log N)$.   Using \eqref{1.5} twice (with $x= \sqrt{6N}/(\pi t)$ and $\sqrt{6N}/(\pi t) (1+1/\log N)^{-1}$ both of which are $\gg (\log N)^4$ in the range $t\le T_0$), the bound in \eqref{8.5} is 
$$ 
\ll N^{\frac 34} p(N) \exp\Big( \frac{\pi \sqrt{N}}{\sqrt{6} \log N} \Big(1- \frac{2}{\log N}\Big) - t \Big(\frac{\pi \sqrt{N}}{\sqrt{6} t} \frac{1}{1+\log N} - \frac{1}{\log N} + O((\log N)^{-2})\Big)\Big).
$$
This simplifies to give the bound 
$$ 
\ll N^{\frac 34} p(N) \exp\Big( - \frac{\pi \sqrt{N}}{\sqrt{6} (\log N)^2} + \frac{t}{\log N} + O\Big(\frac{t}{(\log N)^2}\Big)\Big), 
$$ 
which is much smaller than $N^{-10} p(N)$ in the range $t \le T_0$.  

In the range $T_0 < t \le T_1$, we choose $y=q^t$ and $k = \sqrt{6N}q^t/(4\pi)$ in \eqref{8.5}.  Note that $y^{-k} = \exp( t q^t/4)$, and using 
$\sigma(n) \ge n$ and $1-y^n \ge (1-q^t)$ for all $n\ge 1$, 
$$
t\sum_{n=1}^{\infty} \frac{\sigma(n)}{n}(1-y^n) q^{tn} \ge t (1-q^t) \frac{q^t}{1-q^t} = t q^t. 
$$ 
Thus the bound in \eqref{8.5} shows that the number of desired partitions is $\ll N^{\frac 34} p(N) \exp( -\frac 34 tq^t) \ll N^{-10} p(N)$ 
for $T_0 < t \le T_1$.     This completes our proof. 
\end{proof}



In the range $t \ge T_1$, we also need an understanding of $c_t(N)$, the number of $t$-core partitions of $N$, which is provided by the next proposition.   Much more is known about the number of $t$-cores $c_t(N)$, and for a thorough treatment see work of Tyler \cite{Tyler26}.   

\begin{proposition}  \label{prop2}  Let $N$ be large, and recall that $q= \exp(-\pi/\sqrt{6N})$.   If $t \ge T_1$, then the number of partitions $\lambda$ of $N$ with ${\mathcal H}_t(\lambda)=0$ (in other words, $t$-cores $\lambda$) is 
$$ 
c_t(N) = p(N) \Big( \exp(-tq^t) +  O(N^{-\frac 1{12}})\Big). 
$$ 
\end{proposition}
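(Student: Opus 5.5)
We follow the Hardy--Ramanujan analysis of Section 2 applied to the generating function \eqref{10}. By Cauchy's formula with the same radius $q$,
$$
c_t(N) = \frac{1}{2\pi}\int_{-\pi}^{\pi} {\mathcal F}(qe^{i\theta}) G_t(qe^{i\theta}) q^{-N} e^{-iN\theta}\, d\theta, \qquad G_t(z)=\prod_{n=1}^\infty (1-z^{tn})^t .
$$
The idea is that for $t\ge T_1$ the factor $G_t$ is essentially constant on the short arc $|\theta|\le N^{-3/5}$ that dominates \eqref{4}. There it should equal $\exp(-tq^t)$ up to a small error, while elsewhere it is trivially bounded.

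\textbf{Step 1: bounding $G_t$ everywhere.} For $|z|\le q$ we have
$$
\log G_t(z) = -t\sum_{m\ge1}\frac{\sigma(m)}{m} z^{tm}.
$$
Since $t\ge T_1$ we get $q^t \le e^{20}N^{-1/2}$, so $tq^t \ll \sqrt N\log N\cdot N^{-1/2}=O(\log N)$. The terms with $m\ge2$ contribute $O(t q^{2t}) = O(N^{-1/2}\log N)$. Hence $|G_t(z)| \le \exp(tq^t+o(1)) \ll N^{A}$ for some absolute $A$ (one may take $A$ around $e^{20}$ times the constant in $\sqrt{6}/\pi\cdot\log\sqrt N$). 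More simply, $|G_t(z)|\le \exp(t|z|^t + O(N^{-1/3}))$.

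\textbf{Step 2: the minor arc.} On $|\theta|>N^{-3/5}$, combine the bound from Step 1 with \eqref{3}. The integrand is then at most ${\mathcal F}(q)q^{-N}N^{A}\exp(-cN^{3/10})$, which is negligible compared with $N^{-1/12}p(N)$ by \eqref{5}. This step is not what worries me. The polynomial size $N^{A}$ of $G_t$ is harmless against $\exp(-cN^{3/10})$, but I must make sure $A$ is genuinely bounded. It is, because $T_1$ includes the shift $-20$ only, so $tq^t\le e^{20}\cdot\frac{\sqrt6}{2\pi}\log N\,(1+o(1))$.

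\textbf{Step 3: the major arc, which is the main point.} For $|\theta|\le N^{-3/5}$, write $z=qe^{i\theta}$. Then
$$
t z^t = tq^t e^{it\theta} = tq^t\bigl(1+O(t|\theta|)\bigr),
$$
and $t|\theta|\le t N^{-3/5}$. For $t$ up to roughly $N^{1/2}\log N$ this is $O(N^{-1/10}\log N)$, so $\log G_t(z) = -tq^t + O(N^{-1/10}(\log N)^2)$. For very large $t$, say $t\ge N^{0.55}$, we simply have $tq^t$ tiny and $G_t(z)=1+o$ directly; indeed $|tz^t|\le tq^t$ is exponentially small. In either regime, $G_t(z)=\exp(-tq^t)+O(N^{-1/12})$ uniformly on the arc; the exponent $1/12$ leaves room for the logarithmic losses. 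Substituting into \eqref{4}, the main term becomes $\exp(-tq^t)$ times the major-arc integral, which equals $p(N)$ up to a negligible error. The error term contributes at most $N^{-1/12}\int|{\mathcal F}(qe^{i\theta})|q^{-N}d\theta \ll N^{-1/12}p(N)$ by \eqref{6}.

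The delicate point is the uniform comparison $tz^t$ versus $tq^t$ in the middle range $t\asymp\sqrt N\log N$. There I need the rotation $e^{it\theta}$ to be close to $1$, which forces the arc width $N^{-3/5}$ against $t\approx N^{1/2}\log N$. The margin $N^{-1/10}$ is what ultimately dictates the $N^{-1/12}$ error.
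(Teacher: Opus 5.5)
Your proposal is correct and is essentially the paper's proof. It uses Cauchy's formula at radius $q$ for the $t$-core generating function and the expansion $\log G_t(qe^{i\theta}) = -tq^t + O(tq^{2t} + tq^t\min(1,t|\theta|))$. The minor arc is then disposed of by \eqref{3}, since $|G_t|\le N^{O(1)}$, and the major arc is finished with \eqref{4} and \eqref{6}.

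The only loose spot is that your two regimes ($t$ up to about $N^{1/2}\log N$, and $t\ge N^{0.55}$) leave a middle range unaddressed. That range is harmless, and the paper avoids any case split: $t^2q^t$ is decreasing for $t\ge T_1$, so $tq^t\cdot t|\theta|\ll N^{1/2}(\log N)^2 N^{-3/5}$ uniformly on the major arc.
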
 
\begin{proof}  Using the generating function \eqref{10}, we obtain 
$$ 
c_t(N) = \frac{1}{2\pi} \int_{-\pi}^{\pi} {\mathcal F}(qe^{i\theta}) \prod_{n=1}^{\infty} (1- (qe^{i\theta})^{tn})^t q^{-N} e^{-iN\theta} d\theta. 
$$ 
Now for $t\ge T_1$, we have $q^t \ll N^{-\frac 12}$ and $t q^t \ll \log N$, and therefore 
$$ 
 \prod_{n=1}^{\infty} (1- (qe^{i\theta})^{tn})^t = \exp\Big( -t q^t e^{it\theta} + O(tq^{2t}) \Big) = \exp\Big( -tq^t + O( tq^{2t} + tq^t \min(1, t|\theta|))\Big). 
 $$
In view of \eqref{3}, the contribution to $c_t(N)$ from the portion of the integral with $|\theta| \ge N^{-\frac 35}$ is much smaller than $p(N)/N$.  
Restricting now to the portion $|\theta| \le N^{-\frac 35}$, we find (since $t^2 q^t \ll N^{\frac 12 +\epsilon}$ for $t>T_1$) 
$$ 
c_t(N) = \frac{1}{2\pi} \int_{|\theta|\le N^{-\frac 35}} {\mathcal F}(qe^{i\theta}) q^{-N} e^{-iN\theta} 
\exp\Big( - tq^t +O(N^{-\frac 1{12}})\Big) d\theta + O(p(N)/N),  
$$
and the result follows from \eqref{4} and \eqref{6}. 
\end{proof}

\section{Distribution of parts} 

\noindent Given a partition $\mu$ of $N$ and a natural number $t$, recall that 
$$
{\mathcal P}_t(\mu) = \frac{1}{t} \sum_{t| \mu_j} \mu_j, 
$$ 
so that $t{\mathcal P}_t(\mu)$ equals the sum over the parts in $\mu$ that are multiples of $t$.   In suitable ranges of $t$, we shall show that ${\mathcal P}_t(\mu)$ is suitably small for most partitions $\mu$.   Clearly ${\mathcal P}_1(\mu) = N$ for all partitions $\mu$ of $N$, and we may restrict attention to $t\ge 2$.   Much is known about the structure of a random partition (for example, see \cite{Fri93}), but our focus is on quick results that are sufficient for our purposes.

 \begin{proposition}\label{prop3} Let $N$ be large and recall that $q= \exp(-\pi/\sqrt{6N})$.  Uniformly for all $1\le t\le N$, the number of partitions $\mu$ of $N$ with at least one part equal to $t$ is 
 $$ 
 p(N-t)  = q^t p(N) \Big( 1+ O\Big(\min\Big(1, \frac{t}{N^{\frac 34}}\Big) \Big)\Big), 
 $$ 
 and in particular this is always $\ll q^t p(N)$.  
 \end{proposition}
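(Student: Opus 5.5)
The bijection is immediate: removing one part equal to $t$ from a partition of $N$ that has such a part gives a partition of $N-t$, and adjoining a part $t$ inverts this. So the count equals $p(N-t)$, and what remains is the asymptotic $p(N-t)=q^t p(N)(1+O(\min(1,t/N^{3/4})))$, uniformly for $1\le t\le N$.

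\emph{Small $t$.} We use the Cauchy integral \eqref{0} for $p(N-t)$, but keep the same radius $q$ as for $p(N)$ rather than the optimal radius for $N-t$. This gives
$$
q^{-t}p(N-t)=\frac{1}{2\pi}\int_{-\pi}^{\pi}{\mathcal F}(qe^{i\theta})q^{-N}e^{-iN\theta}e^{it\theta}\,d\theta .
$$
Subtracting \eqref{0} for $p(N)$, we get
$$
q^{-t}p(N-t)-p(N)=\frac{1}{2\pi}\int_{-\pi}^{\pi}{\mathcal F}(qe^{i\theta})q^{-N}e^{-iN\theta}\,(e^{it\theta}-1)\,d\theta .
$$
We bound $|e^{it\theta}-1|\le \min(2,t|\theta|)$ and split the range at $|\theta|=N^{-3/5}$.
\begin{itemize}
\item On $|\theta|\ge N^{-3/5}$, the bound \eqref{3} makes the contribution $O({\mathcal F}(q)q^{-N}\exp(-cN^{3/10}))$, which is negligible against $p(N)/N$ by \eqref{5}.
\item On $|\theta|\le N^{-3/5}$, we insert $t|\theta|$ and use \eqref{3} in the form $|{\mathcal F}(qe^{i\theta})|\le {\mathcal F}(q)e^{-cN^{3/2}\theta^2}$. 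This gives a bound of order
$$
t\,{\mathcal F}(q)q^{-N}\int|\theta|e^{-cN^{3/2}\theta^2}\,d\theta\ll t\,N^{-3/2}{\mathcal F}(q)q^{-N}\ll tN^{-3/4}p(N),
$$
again by \eqref{5}.
\end{itemize}
Hence $q^{-t}p(N-t)=p(N)(1+O(t/N^{3/4}))$. This is the main estimate.

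\emph{Large $t$.} For $t\ge N^{3/4}$ we only need $p(N-t)\ll q^tp(N)$. The same integral representation with absolute values gives
$$
q^{-t}p(N-t)\le\frac{1}{2\pi}\int_{-\pi}^{\pi}|{\mathcal F}(qe^{i\theta})|\,q^{-N}\,d\theta\ll p(N),
$$
by \eqref{6}. This holds uniformly in $0\le t\le N$, including $t=N$ where $p(0)=1$.

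Since the second bound covers every $t$, the final claim $p(N-t)\ll q^tp(N)$ follows directly. The only delicate point is the small-$t$ range: the Gaussian moment $\int|\theta|e^{-cN^{3/2}\theta^2}\,d\theta\asymp N^{-3/2}$ must be matched correctly against the normalization ${\mathcal F}(q)q^{-N}\asymp N^{3/4}p(N)$ from \eqref{5}. It works without optimizing the radius because we use the fixed $q$ for both $N$ and $N-t$.
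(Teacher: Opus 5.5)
Your proposal is correct and follows essentially the paper's proof. It uses the same bijection and writes $p(N-t)$ as a Cauchy integral at the fixed radius $q$, bounds the error factor by $|e^{it\theta}-1|\ll\min(1,t|\theta|)$, and controls the remainder with \eqref{3}, \eqref{5} and \eqref{6}; you also spell out the first-moment computation $\int|\theta|e^{-cN^{3/2}\theta^2}\,d\theta\asymp N^{-3/2}$ that the paper leaves implicit.
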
 
\begin{proof}  Adjoining a part $t$ to any partition of $N-t$ gives a bijective correspondence with partitions of $N$ having at least one part equal to $t$.   By the integral formula
$$ 
p(N-t) = \frac{1}{2\pi} \int_{-\pi}^{\pi} {\mathcal F}(qe^{i\theta}) q^{-N+t} e^{-iN\theta} e^{it\theta} d\theta = 
\frac{q^{t}}{2\pi} \int_{-\pi}^{\pi} {\mathcal F}(qe^{i\theta}) q^{-N} e^{-iN\theta} \Big( 1+ O( \min(1,t|\theta|)\Big) d\theta. 
$$ 
The main term above equals $q^t p(N)$ and the remainder terms may be bounded using \eqref{3} and \eqref{5}.  The proposition follows.
\end{proof} 

Recall the thresholds $T_0$ and $T_1$ defined in \eqref{7} above.

\begin{proposition}  
\label{prop4} Let $N$ be large and recall that $q= \exp(-\pi/\sqrt{6N})$. 

(i)  In the range $2 \le t\le T_0$, the number of partitions $\mu$ of $N$ with ${\mathcal P}_t(\mu) > \frac{9N}{10t}$ is $\ll N^{-10}p(N)$.

(ii) In the range $T_0 < t\le T_1$, the number of partitions $\mu$ of $N$ with ${\mathcal P}_t(\mu) > \frac{\sqrt{6N}}{4\pi } q^t$ is $\ll N^{-10}p(N)$.  

(iii) In the range $t \ge T_1$, the number of partitions $\mu$ of $N$ with ${\mathcal P}_t(\mu) \ge 2$ is $\ll q^{2t} p(N)$.
\end{proposition}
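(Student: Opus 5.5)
Parts (i) and (ii) follow the Rankin-type argument of Proposition \ref{prop1}, using the generating function for $\mathcal P_t$; part (iii) is a union bound via Proposition \ref{prop3}.

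For (i) and (ii), mark the parts divisible by $t$. Write $\mu$ as $m_n$ copies of each part $n$. Then
$$
\sum_{\mu} z^{|\mu|} w^{{\mathcal P}_t(\mu)} = \prod_{t\nmid n}(1-z^n)^{-1}\prod_{j\ge1}(1-w^j z^{tj})^{-1} = {\mathcal F}(z)\prod_{j\ge 1}\frac{1-z^{tj}}{1-w^jz^{tj}} .
$$
For $w=y\ge 1$ with $yq^t<1$, the number of $\mu\vdash N$ with ${\mathcal P}_t(\mu)>k$ is at most $y^{-k}q^{-N}\sum_\mu q^{|\mu|}y^{{\mathcal P}_t(\mu)}$. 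By \eqref{5} this is
$$
\ll N^{3/4}p(N)\, y^{-k}\exp\Big(\sum_{n\ge1}\frac{\sigma(n)}{n}(y^n-1)q^{tn}\Big),
$$
after expanding the logarithms exactly as in \eqref{8.5}. The only difference from \eqref{8.5} is that the factor $t$ is now absent, since the exponent in Han's formula is replaced by $1$.

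For (ii), take $y=q^{-t/2}$, so that $yq^t=q^{t/2}$. Since $t>T_0$, we have $q^{t/2}\le \exp(-(\log N)^{-4}/2)$ bounded away from $1$ only weakly, so I must be careful here. Use $\sigma(n)/n\le \sum_{d|n}1/d\ll \log(n+1)$, or more simply bound the sum by $\sum_n \sigma(n)n^{-1}q^{tn/2}$. By \eqref{1.5} with $x=2\sqrt{6N}/(\pi t)$, this is at most $\frac{\pi^2}{6}x+O(\log N)=\frac{\pi\sqrt{6N}}{3t}+O(\log N)$. Meanwhile $y^{-k}=\exp(-\frac{t}{2}\cdot\frac{\pi}{\sqrt{6N}}\cdot\frac{\sqrt{6N}}{4\pi}q^t)=\exp(-tq^t/8)$. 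This mismatch is bad for large $t$, since $q^t$ is tiny while $\sqrt N/t$ is not.

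So I instead choose $y$ so that the $j=1$ term dominates, namely $y=1/(2q^{t/2})$ or similar. A cleaner choice is $y=\min(q^{-t/2},\,\cdot)$ optimized so that $yq^t\le q^{t/2}$, with the sum dominated by $\frac{yq^t}{1-yq^t}$. With $y=q^{-t/2}$ and $t\ge T_0$ large, the terms are $\sum_n\sigma(n)n^{-1}q^{tn/2}\ll \frac{q^{t/2}}{1-q^{t/2}}\log N$. Against this, $y^{-k}=\exp(-tq^t/8)$. For $t$ near $T_1$ this gives $tq^t\approx e^{20}\sqrt{6}\log N/\pi$, which is large, while $q^{t/2}\ll N^{-1/4}$. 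That range is fine. For smaller $t$ I would instead take $y=2$: then $y^{-k}=\exp(-\log 2\cdot\sqrt{6N}q^t/(4\pi))$, while the exponent is $\sum_n \sigma(n)n^{-1}(2^n-1)q^{tn}$, which is dominated by $n=1$ provided $2q^t\le 1/2$ and equals $q^t(1+O(q^t))$. Since $\sqrt N q^t/ (4\pi)\cdot\sqrt6\log 2$ exceeds $2q^t+11\log N$ once $\sqrt N q^t\gg\log N$, which holds for $t\le T_1$, this choice works whenever $q^t\le 1/4$.

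For $T_0<t$ with $q^t>1/4$, that is $t\ll\sqrt N$, I take $y=1+\delta$ with $\delta\asymp(1-q^t)$ and compare $\frac{\delta}{1-q^t}\cdot\frac{\sqrt{N}}{t}$-sized terms. \textbf{This intermediate regime is the main obstacle}: both the moment and the threshold are of order $\sqrt N/t$ there, and the constants $\frac14$ versus the mean $\approx \frac{\sqrt{6N}}{\pi t}\cdot\frac{q^t}{1-q^t}\cdot$ must leave a gap. Since the mean of ${\mathcal P}_t$ is about $\sum_j q^{tj}=q^t/(1-q^t)$, whereas the threshold is $\frac{\sqrt{6N}}{4\pi}q^t\gg \frac{q^t}{1-q^t}\cdot\frac{t}{4}(1+o(1))$, a large factor appears when $t\to\infty$. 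Choosing $y=q^{-t/2}$ then works, with the sum being at most $\log\mathcal F(q^{t/2})$-type terms of size $\ll \sqrt N/t\cdot$, compared with $tq^t\cdot\sqrt N/t$.

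Part (i) is the same argument with $y=q^{-t/2}$ and $k=9N/(10t)$. By \eqref{1.5} the exponent is $\frac{\pi^2}{6}\cdot\frac{2\sqrt{6N}}{\pi t}(1+o(1))-\frac{\pi^2}{6}\cdot\frac{\sqrt{6N}}{\pi t}$ against $\frac{9N}{10t}\cdot\frac{\pi t}{2\sqrt{6N}}=\frac{9\pi\sqrt N}{20\sqrt6}$. A better choice of $y$ (for instance $y=q^{-ct}$ with $c$ optimized) makes this strictly negative by $\gg\sqrt N$, which dominates $N^{3/4}$.

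For (iii), if ${\mathcal P}_t(\mu)\ge2$, then either $\mu$ has two parts equal to $t$, or a part $jt$ with $j\ge2$. Removing the parts gives counts $p(N-2t)$ and $p(N-jt)$. By Proposition \ref{prop3} these are $\ll q^{2t}p(N)$ and $\ll q^{jt}p(N)$ respectively, and summing over $j\ge2$ gives $\ll q^{2t}p(N)$, since $q^t\ll N^{-1/2}$ for $t\ge T_1$.
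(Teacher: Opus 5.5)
Your part (iii) is essentially the paper's argument. The paper bounds the count by $\sum_{\ell\ge 2}p(\ell)p(N-\ell t)$ via \eqref{8.6}; your union bound over ``two parts equal to $t$'' or ``a part $jt$ with $j\ge 2$'' is equivalent and slightly cleaner. For (i) and (ii) you take a genuinely different route. The paper uses the exact decomposition $\#\{\mu\vdash N:\mathcal P_t(\mu)=\ell\}=p(\ell)a_t(N-t\ell)\le p(\ell)p(N-t\ell)$, then only the crude bound $p(n)\ll\exp(\pi\sqrt{2n/3})$ and concavity of $\sqrt{\cdot}$. For instance, in (i) one has $\sqrt{\ell}+\sqrt{N-t\ell}\le\sqrt{N/10}\,(1+3/\sqrt t)<0.99\sqrt N$ for $\ell>9N/(10t)$ and $t\ge 2$, so no auxiliary parameter has to be chosen. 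You instead apply Rankin's trick to $\mathcal F(z)\prod_j(1-z^{tj})/(1-w^jz^{tj})$, exactly parallel to Proposition~\ref{prop1}. Your generating function and the bound $N^{3/4}p(N)\,y^{-k}\exp\big(\sum_n\frac{\sigma(n)}{n}(y^n-1)q^{tn}\big)$ are correct, and the method does prove (i) and (ii). However, everything then hinges on the choice of $y$, and that is where your write-up goes wrong.

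\textbf{Part (ii).} Your first choice $y=q^{-t/2}$ already works on the whole range, and your reason for abandoning it is mistaken: near $T_1$, $\sqrt N/t\asymp 1/\log N$ is small, not large. Instead of \eqref{1.5}, which needs $x$ large and so is unavailable near $T_1$, use the elementary bound $\log\mathcal F(e^{-s})=\sum_k\frac{1}{k(e^{sk}-1)}\le\frac{\pi^2}{6s}$. Put $u=\pi t/\sqrt{6N}\in((\log N)^{-4},\log\sqrt N-20]$. The exponent is at most $\log\mathcal F(q^{t/2})\le\pi^2/(3u)$, while $\log y^{-k}=-tq^t/8=-\frac{\sqrt{6N}}{8\pi}ue^{-u}$.
\begin{itemize}
\item For $u\le 1$, the gain is $\gg\sqrt N(\log N)^{-4}$ against a loss $\ll(\log N)^4$; this is exactly where $t>T_0$ is used.
\item For $u\ge 1$, the gain is at least $\frac{\sqrt6 e^{20}}{8\pi}(\log\sqrt N-20)$ against a loss of at most $\pi^2/3$.
\end{itemize}
That is all of (ii).

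By contrast, your claim that $y=2$ works for every $t\le T_1$ with $q^t\le\frac14$ is false. Since $\sqrt N q^{T_1}=e^{20}$, we get $k\log 2=O(1)$ there, which cannot beat $11\log N$. In fact $y=2$ is only valid for $t\le\frac{\sqrt{6N}}{\pi}(\log\sqrt N-\log\log N-O(1))$. Your case split is rescued only because the unquantified ``near $T_1$'' argument happens to cover the remaining range. Your ``intermediate regime'' is also no obstacle. There the mean of $\mathcal P_t$ is of order $N/t^2\ll(\log N)^8$, not $q^t/(1-q^t)$, while the threshold is $\gg\sqrt N$. Your closing comparison with ``$tq^t\cdot\sqrt N/t$'' is not the relevant quantity; the $u\le 1$ computation above is what is needed.

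\textbf{Part (i).} Your own numbers show that $y=q^{-t/2}$ fails at $t=2$: the net exponent $\frac{\pi\sqrt N}{\sqrt6}\big(\frac1t-\frac{9}{20}\big)+O(\log N)$ is positive there. With $y=q^{-ct}$ the net exponent is $\frac{c\pi\sqrt N}{\sqrt6}\big(\frac{1}{(1-c)t}-\frac{9}{10}\big)+O(\log N)$, which is negative at $t=2$ only when $c<\frac49$. Taking $c=\frac14$ gives at most $-\frac{7\pi}{120\sqrt6}\sqrt N+O(\log N)$ uniformly for $2\le t\le T_0$, which is what you need. You should state this explicitly rather than leaving $c$ ``optimized''.
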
    
\begin{proof}    Let $a_t(n)$ denote the number of partitions of $n$ into parts that are not multiples of $t$.  Separate the parts in a partition of $N$ into the parts that are multiples of $t$, and into the parts that are not multiples of $t$.  The number of partitions $\lambda$ of $N$ with ${\mathcal P}_t(\mu) =\ell$ is clearly $p(\ell) a_t(N-t\ell)$.   Therefore, for any $k$, the number of partitions $\mu$ with ${\mathcal P}_t(\mu) >k$ is 
\begin{equation} 
\label{8.6} \sum_{\ell >k} p(\ell) a_t(N-t\ell) \le \sum_{\ell >k} p(\ell) p(N-t\ell), 
\end{equation} 
since $a_t(n)\le p(n)$ always.  

In the range $2\le t \le T_0$, we use the bound in \eqref{8.6} taking $k= \frac{9N}{10t}$, together with the crude estimate $p(n) \ll \exp(2\pi \sqrt{n}/\sqrt{6})$.  A small calculation shows a bound much smaller than $N^{-10} p(N)$.   A similar argument with $k= \frac{\sqrt{6N}}{4\pi} q^t$ works in the range $T_0 \le t \le T_1$.   

Finally, the same argument gives that for $t\ge T_1$, the number of partitions $\mu$ with ${\mathcal P}_t(\mu) \ge 2$ is (using Proposition \ref{prop3})
$$ 
\ll \sum_{\ell \ge 2} p(\ell) p(N-\ell t) \ll p(N) \sum_{\ell \ge 2} p(\ell) q^{\ell t} \ll q^{2t} p(N). 
$$ 
This completes our proof. 
\end{proof} 

\section{Proof of the main theorem} 

\noindent We will now count the number of pairs $(\lambda, \mu)$ of partitions of $N$ satisfying Stanley's criterion \eqref{St0}.   Equivalently, these are the pairs $(\lambda,\mu)$ for which there exists a natural number $t$ with ${\mathcal P}_t(\mu) > {\mathcal H}_t(\lambda)$.  Since ${\mathcal H}_1(\lambda)={\mathcal P}_1(\mu)=N$, we may assume that $t \ge 2$.   There are several different ways in which this could happen, and we consider the following possibilities: 

\begin{enumerate}[(i)]
\item For some $2\le t\le T_1$ we have ${\mathcal P}_t (\mu) > {\mathcal H}_t(\lambda)$. 
\item For some $t>T_1$ one has ${\mathcal P}_t(\mu) \ge 2$. 
\item There exists $T_1 <t \le T_2$ with ${\mathcal P}_t(\mu)=1$ and ${\mathcal H}_t(\lambda)=0$.   
\item The partition $\mu$ contains two parts $t_1 < t_2$ that are both larger than $T_2$. 
\item The partition $\mu$ has a unique part $t >T_2$ and $\lambda$ is a $t$-core.  
\end{enumerate} 

Any pair $(\lambda,\mu)$ satisfying Stanley's criterion must satisfy at least one of these five conditions above.    We will show that the number of pairs $(\lambda,\mu)$ satisfying either of the conditions (i) or (ii) above is 
\begin{equation} 
\label{13} 
\ll N^{-\frac 12} p(N)^2, 
\end{equation} 
while the number of pairs $(\lambda,\mu)$ satisfying either condition (iii) or (iv) is 
\begin{equation} 
\label{13.5} 
\ll p(N)^2 \frac{(\log \log N)^2}{(\log N)^2}, 
\end{equation} 
and lastly the number of pairs satisfying condition (v) equals 
\begin{equation} 
\label{14} p(N)^2 \Big( \frac{2}{\log N} + O\Big(\frac{\log \log N}{(\log N)^2}\Big)\Big). 
\end{equation} 
Since every pair satisfying condition (v) corresponds to a zero of type I, and zeros of type III that are not of type II must satisfy either condition (i) or (ii), our main Theorem would follow at once.

We begin with bounding the pairs satisfying condition (i).   Given an integer  $t \le T_0$, if ${\mathcal P}_t(\mu) > {\mathcal H}_t(\lambda)$ then either ${\mathcal H}_t(\lambda) \le (N/t)(1-2/\log N)$ or ${\mathcal P}_t(\mu) > 9N/(10 t)$.   By Propositions~\ref{prop1} and~\ref{prop4}, the number of such pairs $(\lambda,\mu)$ is $\ll p(N)^2 N^{-10}$.   Summing this over all the possibilities for $t$, we conclude that there are at most $\ll N^{-9} p(N)^2$ pairs $(\lambda,\mu)$ with ${\mathcal P}_t(\mu)> {\mathcal H}_t(\lambda)$ for some $2\le t\le T_0$. Similarly, Propositions~\ref{prop1} and~\ref{prop4} show that there are at most $\ll N^{-9} p(N)^2$ pairs $(\lambda, \mu)$ with ${\mathcal P}_t(\mu) > \frac{\sqrt{6N}}{4\pi} q^t$ or ${\mathcal H}_t(\lambda)\le \frac{\sqrt{6N}}{4\pi} q^t$ for some $T_0 \le t\le T_1$.  We thus conclude that the number of pairs satisfying condition (i) is much smaller than the bound claimed in \eqref{13}.

We move now to condition (ii).   By part (iii) of Proposition~\ref{prop4}, the number of partitions $\mu$ with ${\mathcal P}_t(\mu) \ge 2$ for some $t>T_1$ is 
$$ 
\ll \sum_{t>T_1} q^{2t} p(N) \ll \frac{q^{2T_1}}{1-q^2}  p(N) \ll \sqrt{N} q^{2T_1} p(N) \ll N^{-\frac 12} p(N). 
$$ 
Thus there are $\ll N^{-\frac 12} p(N)^2$ pairs of partitions satisfying condition (ii), which is the bound in \eqref{13}.

Now consider condition (iii), and suppose that $t$ is in the range  $T_1 < t\le T_2$.  By Propositions~\ref{prop2} and~\ref{prop3}, the number of pairs $(\lambda, \mu)$ with 
${\mathcal P}_t(\mu) =1$ and ${\mathcal H}_t(\lambda)=0$ for some $t$ in this range is 
\begin{align*}
&\ll p(N)^2 \sum_{T_1 < t\le T_2} q^t \exp(-tq^t) \ll p(N)^2 \exp(-T_2 q^{T_2})  \sum_{T_1 < t\le T_2} q^t 
\\
& \ll \frac{p(N)^2}{(\log N)^{100}} \sqrt{N} q^{T_1} \ll \frac{p(N)^2}{(\log N)^{100}}.  
\end{align*}
 This is negligible in comparison to \eqref{13.5}.

Observe next that the number of partitions $\mu$ with two parts $t_1$ and $t_2$ both larger than $T_2$ is 
$$ 
\ll \sum_{t_2 > t_1 > T_2} p(N-t_1-t_2) \ll p(N) \sum_{t_2 > t_2 > T_2} q^{t_1 + t_2} \ll p(N) \Big( \frac{q^{T_2}}{1-q}\Big)^2 \ll p(N) \frac{(\log \log N)^2}{(\log N)^2}. 
$$ 
There are thus at most $\ll p(N)^2 (\log \log N)^2/(\log N)^2$ pairs satisfying condition (iv).  

Finally we are left with case (v).   Here we must count partitions $\mu$ that have a unique part $t > T_2$ (so that ${\mathcal P}_t(\mu)=1$) and partitions $\lambda$ that are $t$-cores.  Since case (iv) reveals that there are few partitions $\mu$ with two parts larger than $T_2$, we may ignore the uniqueness condition, and focus simply on partitions $\mu$ containing a part $t>T_2$ and corresponding $t$-cores $\lambda$. By Propositions~\ref{prop2} and~\ref{prop3} the number of such pairs is 
\begin{align*}
&p(N)^2 \sum_{t > T_2} \Big( \exp(-tq^t) + O( N^{-\frac {1}{12}} ) \Big) q^t \Big( 1+ O\Big( \min \Big(1, \frac{t}{N^{\frac 34}}\Big) \Big) \\
= &p(N)^2 
\Big( \sum_{t> T_2} q^t \exp(-tq^t) + O(N^{-\frac 1{12}})\Big). 
\end{align*}
To evaluate the sum above, note that for $t>T_2$ 
\begin{align*} 
q^t \exp(-tq^t) &= \int_{t}^{t+1} q^x \exp(-xq^x) dx + O\Big( \max_{t\le x \le t+1} \Big| (q^x \exp(-xq^x))^{\prime} \Big| \Big) 
\\
&= \int_{t}^{t+1} q^x \exp(-xq^x) dx + O(q^t N^{-\frac 12}), 
\end{align*}
so that 
\begin{align*}
\sum_{t>T_2} q^t \exp(-tq^t) &= \int_{T_2}^{\infty} q^x \exp(-xq^x) dx + O\Big( \sum_{t>T_2} q^t N^{-\frac 12}\Big) \\
&= \int_{0}^{\infty} q^{T_2+y} \exp(-(T_2+y) q^{T_2+y}) dy + O(N^{-\frac 12}). \\ 
\end{align*}
Now for $y\ge 0$ (using $e^{-\xi} = 1+ O(\xi)$ for $\xi >0$ and that $yq^y \le 1/\log (1/q)$ for $y>0$)  
\begin{align*}
\exp(-(T_2 +y) q^{T_2+y}) &= \exp(-T_2 q^{T_2+y}) \Big( 1+ O(yq^{T_2+y}) \Big) \\
                          &= \exp (-T_2 q^{T_2+y})  \Big( 1+ O\Big(\frac{q^{T_2}}{\log (1/q)}\Big)\Big)\\
  &=  \exp (-T_2 q^{T_2+y}) \Big( 1+ O\Big(\frac{\log \log N}{\log N}\Big)\Big), 
\end{align*}
and 
$$ 
\int_0^{\infty} q^{T_2+y} \exp(-T_2 q^{T_2+y}) dy = \frac{1-\exp(-T_2 q^{T_2})}{T_2 \log (1/q)} = \frac{2}{\log N} + O\Big(\frac{\log \log N}{(\log N)^2}\Big). 
$$
 Thus the count of pairs satisfying condition (v) obeys the asymptotic \eqref{14}, and the proof of Theorem 1 is complete.

\bibliographystyle{plain} 
\bibliography{bib}

  \end{document}